\newtheorem{defi}{Definition}[section]
\newtheorem{theorem}{Theorem}[section]
\newtheorem{lemma}{Lemma}[section]
\newtheorem{remark}{Remark}[section]
\numberwithin{equation}{section}
\newtheorem*{theorem*}{Theorem}
\def\O{\Omega}
\def\R{{\mathbb R}}
\def\d{\delta}
\def\d{\mbox{diam}}
\begin{document}

\subjclass[2010]{Primary: 46E35; Secondary: 46B70, 26D10.}

\title[Weighted fractional Sobolev spaces]{Weighted fractional Sobolev spaces as interpolation spaces in bounded domains}

\author{Gabriel Acosta}
\address{IMAS (UBA-CONICET) and Departamento de Matem\'atica, Facultad de Ciencias Exactas y Naturales, Universidad de Buenos Aires, Ciudad Universitaria, 1428 Buenos Aires, Argentina}
\email{gacosta@dm.uba.ar}

\author{Irene Drelichman}
\address{IMAS (UBA-CONICET), Facultad de Ciencias Exactas y Naturales, Universidad de Buenos Aires, Ciudad Universitaria, 1428 Buenos Aires, Argentina}
\email{irene@drelichman.com}

\author{Ricardo G. Dur\'an}
\address{IMAS (UBA-CONICET) and Departamento de Matem\'atica, Facultad de Ciencias Exactas y Naturales, Universidad de Buenos Aires, Ciudad Universitaria, 1428 Buenos Aires, Argentina}
\email{rduran@dm.uba.ar}

\thanks{Supported by FONCYT under grants  PICT-2018-03017 and PICT-2018-00583, and by Universidad de Buenos Aires under grant 20020160100144BA}

\begin{abstract}
We characterize the real interpolation space between a weighted $L^p$ space and a weighted Sobolev space in arbitrary bounded domains in $\mathbb{R}^n$, with weights that are positive powers of the distance to the boundary. 
\end{abstract}

\keywords{Fractional Sobolev spaces, Gagliardo seminorm, irregular domains, interpolation spaces.}

\maketitle

\section{Introduction}
The aim of this article is to contribute to the study of weighted fractional Sobolev spaces in arbitrary bounded domains in $\mathbb{R}^n$, when the weights are positive powers of the distance to the boundary, by characterizing them as real interpolation spaces. The results we obtain are new even in the case of smooth domains. 

To be more precise, let us introduce some notation. For any bounded domain  $\Omega\subset \R^n$ we denote by  $d(x)=d(x, \partial\Omega)$  the distance from $x$ to the boundary of $\Omega$. We will consider the weighted Sobolev spaces 
$$
W^{1,p}(\Omega, d^\alpha, d^\beta)= \{f \in L^p(\Omega, d^\alpha) : \|\nabla f\|_{L^p(\Omega, d^{\beta})}< \infty\},
$$
where $\alpha, \beta \ge 0$ and $\|f\|_{L^p(\Omega, d^{\alpha})}=\|f d^\frac{\alpha}{p}\|_{L^p(\Omega)}$,  and their fractional counterpart
$$
\widetilde W^{s,p}(\Omega, d^\alpha, d^\beta)= \{f \in L^p(\Omega, d^\alpha) : | f |_{\widetilde W^{s,p}(\Omega, d^{\beta})}< \infty\},
$$
where $0<s<1$ and
\begin{equation}
\label{eq:seminorma_frac}
|f|_{\widetilde W^{s,p}(\Omega, d^\beta)}^p = \int_\Omega \int_{|x-y|<\frac{d(x)}{2}} \frac{|f(x)-f(y)|^p}{|x-y|^{n+sp}} \,  dy  \, d(x)^\beta \,dx .
\end{equation}
In the unweighted case, we will simply write $W^{1,p}(\Omega) = W^{1,p}(\Omega, 1, 1)$, $\widetilde W^{s,p}(\Omega) = \widetilde W^{s,p}(\Omega, 1,1)$, and ${|\cdot|_{\widetilde W^{s,p}(\Omega)}}= |\cdot|_{\widetilde W^{s,p}(\Omega,1)}$.

Observe that, due to the fact that the region of integration of its inner integral is restricted to  $|x-y|<\frac{d(x)}{2}$, the seminorm  \eqref{eq:seminorma_frac}   is equivalent to
\begin{equation}
\label{eq:seminorma_frac_sim}
 \int_\Omega \int_{|x-y|<\frac{d(x)}{2}} \frac{|f(x)-f(y)|^p}{|x-y|^{n+sp}} \, \delta^{\beta}(x,y) \, dy \,dx,
\end{equation}
where $\delta(x,y)=\min\{d(x), d(y)\}$. Moreover, replacing $\frac12 d(x)$ by $\tau d(x)$ for any fixed $\tau \in (0,1)$ in   
\eqref{eq:seminorma_frac} and \eqref{eq:seminorma_frac_sim},  the norms of the associated spaces are equivalent (see Remark \ref{remark22}).

The seminorm $|f|_{\widetilde W^{s,p}(\Omega)}$ was introduced in the context of Poincar\'e and Sobolev-Poincar\'e inequalities in John domains in \cite{HV} and further results on its relevance for these inequalities were obtained in \cite{DD-fracpoincare, DIV, G, MP}. It is equivalent to the usual Gagliardo seminorm in $\Omega \times \Omega$ in Lipschitz domains (see  \cite[equation (13)]{D}) and, more generally, in uniform domains (see \cite[Corollary 4.5]{PS}). More importantly,  it is a replacement for  the non-inclusion $W^{1,p}(\Omega) \not \subset W^{s,p}(\Omega)$ when $\Omega$ is an irregular domain (see \cite[Example 2.1]{DD-interpolacion} for such an example). In fact, the embedding  
$W^{1,p}(\Omega)  \subset \widetilde W^{s,p}(\Omega)$ holds for any bounded domain (see \cite[Lemma 2.2]{DD-BBM}) and one also has the Bourgain-Br\'ezis-Mironescu limit property, namely that for any $1<p<\infty$ and $f\in L^p(\Omega)$,  $\lim_{s\to 1^-} (1-s) |f|_{\widetilde W^{s,p}(\Omega)}^p = K_{n,p} \|\nabla f\|_{L^p(\Omega)}^p$, where $K_{n,p}$ is an explicit constant and the right-hand side of the inequality is understood to be infinity if $f\not\in W^{1,p}(\Omega)$ (see \cite[Theorem 1.1]{DD-BBM}). A deeper result in \cite[Theorem 3.2]{DD-interpolacion} is that for a general class of irregular domains one has  $(L^p(\Omega), W^{1,p}(\Omega))_{s,p} = \widetilde W^{s,p}(\Omega)$ which is a generalization of the classical result $(L^p(\Omega), W^{1,p}(\Omega))_{s,p} =  W^{s,p}(\Omega)$ for smooth domains (see \cite[Section 1.1.2, Exercise 7]{L} or \cite[Lemmas 35.2 and 36.1]{T}). Here, $(L^p(\Omega), W^{1,p}(\Omega))_{s,p}$ stands for the real interpolation space (see Section 2 for its definition).

Perhaps suprinsingly, while for irregular domains it may happen that $\widetilde W^{s,p}(\Omega) \neq W^{s,p}(\Omega)$, it turns out that,  for every $\alpha \ge 0$,  $\widetilde W^{s,p}(\Omega, d^{\alpha p}, d^{(\alpha+s)p})=  W^{s,p}(\Omega, d^{\alpha p}, \delta^{(\alpha+s)p})$  with equivalence of norms, where

$$
 W^{s,p}(\Omega, d^{\alpha p}, \delta^{(\alpha+s)p})= \{f \in L^p(\Omega, d^{\alpha p}) : | f |_{ W^{s,p}(\Omega, \delta^{(\alpha+s)p})}< \infty\},
$$
and 
 \begin{equation}
|f|_{W^{s,p}(\Omega, \delta^\beta)}^p = \int_\Omega \int_\Omega \frac{|f(x)-f(y)|^p}{|x-y|^{n+sp}}  \delta(x,y)^\beta \,  dy  \,  \,dx
\end{equation}
(see Lemma \ref{equivalentes}).

As previously announced, our aim is to  characterize the above fractional weighted spaces as  real interpolation spaces. More precisely, we prove:

\begin{theorem}
\label{inter_general}
Let $\Omega\subset \R^n$ be an open bounded domain,  $0<s<1$ and $\alpha \ge 0$, then 
we have
$$(L^p(\Omega, d^{\alpha p}), W^{1,p}(\Omega, d^{\alpha p}, d^{(\alpha+1)p}))_{s,p}= \widetilde W^{s,p}(\Omega, d^{\alpha p}, d^{(\alpha+s)p})=  W^{s,p}(\Omega, d^{\alpha p}, \delta^{(\alpha+s)p})$$
with equivalence of norms.
\end{theorem}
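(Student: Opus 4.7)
The plan is to first invoke Lemma \ref{equivalentes} to reduce the theorem to the single interpolation identity
$$(L^p(\Omega, d^{\alpha p}), W^{1,p}(\Omega, d^{\alpha p}, d^{(\alpha+1)p}))_{s,p}= \widetilde W^{s,p}(\Omega, d^{\alpha p}, d^{(\alpha+s)p}),$$
and then prove this identity via a Whitney-cube reduction. Fix a Whitney decomposition $\{Q_j\}$ of $\Omega$ (so $\ell(Q_j)\approx d(Q_j,\partial\Omega)$), a bounded-overlap enlargement $Q_j^*=\tfrac{9}{8}Q_j$, and a smooth partition of unity $\{\phi_j\}$ subordinate to $\{Q_j^*\}$ with $\|\nabla\phi_j\|_\infty\lesssim \ell(Q_j)^{-1}$. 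The guiding observation is that the restriction $|x-y|<d(x)/2$ in \eqref{eq:seminorma_frac} forces every contributing pair $(x,y)$ to lie in a common $Q_j^*$, and on each such cube the weight $d(x)^\gamma$ is comparable to the constant $\ell(Q_j)^\gamma$. The weights therefore resolve only the Whitney scale, which is exactly what makes a local-to-global argument viable for arbitrary $\Omega$.

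For the inclusion $(L^p,W^{1,p})_{s,p}\hookrightarrow \widetilde W^{s,p}$, I would take, for each $t>0$, a near-optimal decomposition $f=g_t+h_t$ with $\|g_t\|_{L^p(\Omega,d^{\alpha p})}+t\,\|h_t\|_{W^{1,p}(\Omega,d^{\alpha p},d^{(\alpha+1)p})}\leq 2K(t,f)$. Splitting the inner integral in \eqref{eq:seminorma_frac} into the regimes $|x-y|\leq t$ and $|x-y|>t$, I would control $|f(x)-f(y)|^p$ on the first regime by $|h_t(x)-h_t(y)|^p$ (handled by the fundamental theorem of calculus along the segment $[x,y]\subset\Omega$, legitimate because $|x-y|<d(x)/2$) and on the second by $2^{p-1}(|g_t(x)|^p+|g_t(y)|^p)$. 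The factor $d(x)^{(\alpha+s)p}$ then absorbs the ``Hardy factor'' $d(x)^{(1-s)p}$ coming from $|x-y|\lesssim d(x)$ on the gradient piece, leaving the correct weight $d^{(\alpha+1)p}$; similarly, integration of the $g_t$-piece produces a factor $d(x)^{-sp}$ that combines with $d(x)^{(\alpha+s)p}$ to give $d^{\alpha p}$. Integrating in $t$ against $t^{-sp-1}\,dt$ and applying Hardy's inequality recovers the interpolation norm.

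For the reverse inclusion I would construct an explicit admissible decomposition by setting, for each $t\in(0,1)$,
$$f_1^t(x)=\sum_j \phi_j(x)\,(f\ast\rho_{t_j})(x),\qquad f_0^t(x)=f(x)-f_1^t(x),$$
where $\rho_r$ is a standard compactly supported mollifier and $t_j=\min(t\ell(Q_j),\varepsilon\ell(Q_j))$ with $\varepsilon$ small enough that $f\ast\rho_{t_j}$ is defined on $Q_j^*$. The weighted $L^p$ error reduces on each cube to $\ell(Q_j)^{\alpha p}\|f-f\ast\rho_{t_j}\|_{L^p(Q_j^*)}^p$, itself controlled by a local Gagliardo-type integral at scale $t_j$ on $Q_j^*$. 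The gradient of $f_1^t$ is split using the identity $\sum_j\nabla\phi_j\equiv 0$ into a ``combinatorial'' piece (estimated via local Poincar\'e on $Q_j^*$, where the factor $\ell(Q_j)^{-p}$ from $|\nabla\phi_j|^p$ combines with the weight $\ell(Q_j)^{(\alpha+1)p}$ to match the $L^p$-side scaling $\ell(Q_j)^{\alpha p}$) and a ``mollifier'' piece (bounded by $|\nabla(f\ast\rho_{t_j})|\lesssim t_j^{-1}\mathrm{osc}_{t_j} f$ and Minkowski). Computing $\int_0^1 (t^{-s}K(t,f))^p\,dt/t$ and applying Fubini on each cube (with $t_j\approx t\ell(Q_j)$) reassembles the global weighted seminorm $|f|_{\widetilde W^{s,p}(\Omega,d^{\alpha p},d^{(\alpha+s)p})}^p$.

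The main obstacle I expect lies in this last reassembly: one must verify that the cube-dependent mollification scale $t_j$ faithfully matches the length scale $|x-y|\lesssim d(x)/2\approx\ell(Q_j)$ appearing in \eqref{eq:seminorma_frac} after integrating in $t$, and that cross-terms between neighbouring enlarged cubes $Q_j^*$ and $Q_k^*$ contribute only a finite, geometry-independent number of overlaps. The combination of the restricted integration region $|x-y|<d(x)/2$ and the positive-power weight structure is precisely what makes this matching automatic and explains why, in contrast to the unweighted analogue treated in \cite{DD-interpolacion}, no regularity hypothesis on $\Omega$ is needed.
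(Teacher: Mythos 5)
Your reduction via Lemma \ref{equivalentes} agrees with the paper, but the core argument for the forward inclusion $(L^p,W^{1,p})_{s,p}\hookrightarrow\widetilde W^{s,p}$ has a genuine scaling gap, and the paper's proof is built precisely to avoid it. You propose to split the inner integral at an \emph{absolute} level $|x-y|\le t$ (versus $|x-y|>t$) and decompose $f=g_t+h_t$ at that same $t$. The problem is that the seminorm carries the weight $d(x)^{(\alpha+s)p}$, which for small $d$ lies strictly \emph{between} the $L^p$ weight $d^{\alpha p}$ and the gradient weight $d^{(\alpha+1)p}$: on the $h_t$ piece you must convert $d^{(\alpha+s)p}$ into $d^{(\alpha+1)p}$, i.e.\ \emph{gain} a factor $d^{(1-s)p}$, and this cannot be done by a uniform constant since $d^{(\alpha+s)p}\ge d^{(\alpha+1)p}$ near the boundary. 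The only place such a gain could come from is the inner integral $\int |x-y|^{p-n-sp}\,dy$; but if you integrate up to $t$ you get $t^{(1-s)p}$ and no correction to the weight, while if you integrate up to $d(x)/2$ you do correct the weight but lose the $t$-scaling that the $K$-functional requires. Carrying your sketch through, one ends up bounding the seminorm by $\int_0^\infty r^{-1}\,\|\nabla h_r\|^p_{L^p(d^{(\alpha+1)p})}\,dr$ rather than $\int_0^\infty r^{(1-s)p-1}\,\|\nabla h_r\|^p\,dr$, so the interpolation norm is not recovered. Your phrase ``the factor $d(x)^{(\alpha+s)p}$ absorbs the Hardy factor'' is where the argument quietly breaks: the absorption runs in the wrong direction.

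The paper sidesteps this tension by first making the change of variables $w=(y-x)/d(x)$ in \eqref{eq:seminorma_frac}: the Jacobian $d(x)^n$ together with $|x-y|^{-(n+sp)}=d(x)^{-(n+sp)}|w|^{-(n+sp)}$ makes the weight cancel exactly, leaving $\int_\Omega\int_{|w|<1/2}|f(x+d(x)w)-f(x)|^p|w|^{-n-sp}\,dw\,dx$ with no weight at all. The decomposition is then taken at the \emph{relative} scale $r=|w|$ (not $r=|x-y|$), and the fundamental-theorem step along $t\mapsto x+t\,d(x)w$ naturally produces $d(x+t d(x)w)^p\,|w|^p$, so the gradient weight $d^p$ (or $d^{(\alpha+1)p}$ in the general case) appears with the correct power automatically. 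The remaining technical ingredient you are missing is Lemma \ref{lema-phi}, a bi-Lipschitz Jacobian estimate for the map $x\mapsto x+t\,d(x)w$, $|w|<1/2$, which is what lets one pull the translated integrals back to $\Omega$. Without this change of variables and the accompanying Jacobian lemma, the forward inclusion does not close.

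For the reverse inclusion your construction is in the right spirit but differs from the paper's. You use the original Whitney cubes $\{Q_j\}$ with their partition of unity $\{\phi_j\}$ at scale $\ell(Q_j)$ and mollify at the finer scale $t_j\approx t\,\ell(Q_j)$, so $\nabla f_1^t$ splits into a cutoff piece and a $\nabla(f\ast\rho_{t_j})$ piece. The paper instead refines the Whitney grid to a $\lambda$-dependent subdivision $\mathcal W^\lambda$ with cubes of side $\approx\lambda\ell_Q$, takes the partition of unity at \emph{that} scale (so $\|\nabla\psi_j^\lambda\|_\infty\approx(\lambda d)^{-1}$), and replaces the local smoothing by \emph{constants} $f_j^\lambda$ (a double mollification average), so that $\nabla h^\lambda=\sum_j f_j^\lambda\nabla\psi_j^\lambda$ has only one kind of term and both the $\lambda^{-1}$ factor and the weight comparison $(\ell_j^\lambda)^{-1}d\approx\lambda^{-1}$ are immediate. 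Your variant is plausibly salvageable, but the key pointwise identity and Hölder estimates in Section~\ref{sec:2} that reduce everything back to $|f|_{\widetilde W^{s,p}(\Omega,d^{\alpha p},d^{(\alpha+s)p})}$ are considerably more delicate than the sketch suggests, and you would still need a version of Lemma \ref{rem:suave} to ensure $h$ may be taken smooth.
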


 The weighted space $ W^{s,2}(\Omega, 1, \delta^{\beta})$ was used in Lipschitz domains in \cite{AB} to establish regularity results and to develop efficient numerical methods for nonlocal equations involving the fractional Laplace operator, while the  spaces $\widetilde W^{s,p}(\Omega, d^\alpha, d^\beta)$ have  been proved useful for Poincar\'e and Sobolev-Poincar\'e inequalities in John domains \cite{DD-fracpoincare, MP}. But, to the best of our knowledge, this is the first result where they are studied in the context of interpolation spaces. Related interpolation results, with applications to elliptic eigenvalue problems, were studied in \cite{P} but for Lipschitz domains only, and with somewhat different characterizations.  
 
The rest of the paper is as follows. We will first prove the equivalence $\widetilde W^{s,p}(\Omega, d^{\alpha p}, d^{(\alpha+s)p})= W^{s,p}(\Omega, d^{\alpha p}, \delta^{(\alpha+s)p})$ and quickly review some necessary preliminaries and notations. Then, for simplicity, we will write the proof of the above theorem in full detail  using the norm of the space $\widetilde W^{s,p}(\Omega, 1, d^{sp})$ for the special case $\alpha=0$,  as all the ideas and technical difficulties are already present there. Finally, in the last section of this article we will indicate how the proof  for $\alpha=0$ can be modified to obtain the proof of Theorem \ref{inter_general} for any $\alpha> 0$.

\section{Notation and Preliminary Results}

Throughout this article, $1\le p<\infty$ and $p^\prime$ is its conjugate exponent, $\frac{1}{p}+\frac{1}{p^\prime}=1$, while $C$ represents a positive constant that might change from line to line.

\begin{lemma}
\label{equivalentes}
Let $\Omega\subset \R^n$ be an open bounded domain,  $0<s<1$ and $\alpha \ge 0$. Then,  
$$\widetilde W^{s,p}(\Omega, d^{\alpha p}, d^{(\alpha+s)p})=  W^{s,p}(\Omega, d^{\alpha p}, \delta^{(\alpha+s)p})$$
  with equivalence of norms.
\end{lemma}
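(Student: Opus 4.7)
The plan is to establish both norm inequalities separately. For the easy direction, starting from the equivalence of \eqref{eq:seminorma_frac} with \eqref{eq:seminorma_frac_sim} (which rests on the fact that whenever $|x-y|<d(x)/2$ one has $d(x)/2<d(y)<3d(x)/2$, so $\delta(x,y)\simeq d(x)$ on that region), the $\widetilde W^{s,p}$-seminorm equals, up to constants, the integral of a nonnegative integrand over a subset of $\Omega\times\Omega$; enlarging the region to all of $\Omega\times\Omega$ immediately produces $|f|_{\widetilde W^{s,p}(\Omega,d^{(\alpha+s)p})}^p\le C\,|f|_{W^{s,p}(\Omega,\delta^{(\alpha+s)p})}^p$.

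For the reverse inequality, I would split $\Omega\times\Omega=A^c\cup A$ with $A=\{(x,y):|x-y|\ge d(x)/2\}$. On $A^c$ the same equivalence shows that the contribution to the $W^{s,p}$-seminorm is controlled by $C\,|f|_{\widetilde W^{s,p}}^p$. On $A$, use $|f(x)-f(y)|^p\le 2^{p-1}(|f(x)|^p+|f(y)|^p)$ to reduce matters to bounding
\[
I_1:=\iint_A\frac{|f(x)|^p\,\delta(x,y)^{(\alpha+s)p}}{|x-y|^{n+sp}}\,dy\,dx \;\;\text{and}\;\; I_2:=\iint_A\frac{|f(y)|^p\,\delta(x,y)^{(\alpha+s)p}}{|x-y|^{n+sp}}\,dy\,dx
\]
by a constant multiple of $\|f\|_{L^p(\Omega,d^{\alpha p})}^p$. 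For $I_1$, I would use $\delta(x,y)\le d(x)$ and compute the $y$-integral in polar coordinates, obtaining $\int_{|x-y|\ge d(x)/2}|x-y|^{-(n+sp)}\,dy\le C\,d(x)^{-sp}$ (finite thanks to $sp>0$); combining with the factor $d(x)^{(\alpha+s)p}$ converts the weight to $d(x)^{\alpha p}$ and yields the desired bound.

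The main obstacle lies in $I_2$. Using $\delta(x,y)\le d(y)$ and Fubini, it reduces to showing
\[
J(y):=\int_{\{x\in\Omega:\,|x-y|\ge d(x)/2\}}\frac{dx}{|x-y|^{n+sp}}\le C\,d(y)^{-sp},
\]
which is delicate because the integration region is not simply the complement of a ball around $y$. I would split $J(y)$ according to $|x-y|\ge d(y)/2$ or $|x-y|<d(y)/2$. On the first piece the usual radial estimate $\int_{|x-y|\ge d(y)/2}|x-y|^{-(n+sp)}\,dx\le C\,d(y)^{-sp}$ applies directly. On the second piece, the triangle inequality $d(x)\ge d(y)-|x-y|>d(y)/2$ forces the constraint $|x-y|\ge d(x)/2$ to confine $|x-y|$ to the annulus $(d(y)/4,d(y)/2)$, whose $n$-dimensional volume is $\simeq d(y)^n$; integrating $|x-y|^{-(n+sp)}\le C\,d(y)^{-(n+sp)}$ over it again yields $C\,d(y)^{-sp}$. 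Multiplying $J(y)\le C\,d(y)^{-sp}$ by $d(y)^{(\alpha+s)p}$ and integrating against $|f(y)|^p$ gives $I_2\le C\,\|f\|_{L^p(\Omega,d^{\alpha p})}^p$. Together with the bounds on $I_1$ and on the $A^c$-part, this produces $|f|_{W^{s,p}(\Omega,\delta^{(\alpha+s)p})}^p\le C\bigl(|f|_{\widetilde W^{s,p}(\Omega,d^{(\alpha+s)p})}^p+\|f\|_{L^p(\Omega,d^{\alpha p})}^p\bigr)$, which, combined with the easy direction, establishes the equivalence of norms.
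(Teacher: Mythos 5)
Your proposal is correct and follows essentially the same path as the paper: enlarging the domain of integration for the easy inclusion, then on $A=\{|x-y|\ge d(x)/2\}$ reducing via $|f(x)-f(y)|^p\le 2^{p-1}(|f(x)|^p+|f(y)|^p)$ and bounding $I_1,I_2$ by $\|f\|_{L^p(\Omega,d^{\alpha p})}^p$. The only difference is in $I_2$: where you split $J(y)$ into the two cases $|x-y|\gtrless d(y)/2$ and handle the near annulus by a volume count, the paper notes directly that on all of $A$ one has $d(y)\le|x-y|+d(x)\le 3|x-y|$ (since $d(x)\le 2|x-y|$ there), so the $x$-region is contained in $\{|x-y|\ge d(y)/3\}$ and a single radial integral suffices; this removes the ``delicate'' case analysis you flagged, but both arguments give the same bound.
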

\begin{proof}
Clearly, $W^{s,p}(\Omega, d^{\alpha p}, \delta^{(\alpha+s)p}) \subset \widetilde W^{s,p}(\Omega, d^{\alpha p}, d^{(\alpha+s)p})$. For the opposite embedding write
\begin{align}
\label{uno}
|f|^p_{ W^{s,p}(\Omega,  \delta^{(\alpha+s)p})} &=  |f|^p_{\widetilde W^{s,p}(\Omega, \delta^{(\alpha+s)p})} + \int_\O\int_{|x-y|\ge \frac{d(x)}{2}} \frac{|f(x)-f(y)|^p}{|x-y|^{n+sp}} \delta(x,y)^{(\alpha+s)p} \, dy \, dx \nonumber \\
&\le |f|^p_{\widetilde W^{s,p}(\Omega, \delta^{(\alpha+s)p})} + 2^p \int_\O\int_{|x-y|\ge\frac{d(x)}{2}} \frac{|f(x)|^p+ |f(y)|^p}{|x-y|^{n+sp}} \delta(x,y)^{(\alpha+s)p} \, dy \, dx
\end{align}

Observe that
\begin{align}
\label{dos}
\int_\O\int_{|x-y|\ge\frac{d(x)}{2}} \frac{|f(x)|^p}{|x-y|^{n+sp}} \delta(x,y)^{(\alpha+s)p} \, dy \, dx &\le \int_\O |f(x)|^p d(x)^{(\alpha+s)p} \left\{\int_{|x-y|\ge \frac{d(x)}{2}} \frac{1}{|x-y|^{n+sp}} dy\right\} dx \nonumber\\
&\le C \int_\O |f(x)|^p d(x)^{\alpha p} \, dx
\end{align}

Also, when $|x-y|\ge \frac{d(x)}{2}$ we have $d(y)\le |x-y|+d(x)\le 3 |x-y|$ and so 
\begin{align}
\label{tres}
\int_\O\int_{|x-y|\ge\frac{d(x)}{2}} \frac{|f(y)|^p}{|x-y|^{n+sp}} \delta(x,y)^{(\alpha+s)p} \, dy \, dx &\le \int_\O |f(y)|^p d(y)^{(\alpha+s)p} \left\{\int_{|x-y|\ge \frac {d(y)}{3}} \frac{1}{|x-y|^{n+sp}} dx\right\} dy \nonumber \\
&\le C \int_\O |f(y)|^p d(y)^{\alpha p} \, dy
\end{align}

Plugging \eqref{dos} and \eqref{tres} into \eqref{uno} we obtain the desired result. 

\end{proof}

\begin{remark}
\label{remark22}
It is easily seen that the previous proof can be modified replacing $\frac{1}{2}d(x)$ by $\tau d(x)$ for any $0<\tau<1$ in the definition of the $\widetilde W^{s,p}(\Omega, d^{\alpha p}, d^{(\alpha+s)p})$ seminorm, thus proving that the norms of the associated spaces are equivalent. 
\end{remark}

Below  we  recall the essential definitions of the real interpolation method. We refer the reader to \cite[Chapter 1]{L} for more details. 

\begin{defi}
For $0<s<1$ the real interpolation space between $L^p(\Omega, d^{\alpha p})$ and $W^{1,p}(\Omega, d^{\alpha p}, d^{(\alpha+1)p})$ is given by
$$
(L^p(\Omega, d^{\alpha p}), W^{1,p}(\Omega, d^{\alpha p}, d^{(\alpha+1)p}))_{s,p} = \{ f\in L^p(\Omega, d^{\alpha p}) : \|f\|_{(L^p(\Omega, d^{\alpha p}), W^{1,p}(\Omega, d^{\alpha p}, d^{(\alpha+1)p}))_{s,p}} <\infty\}
$$
where 
$$
\|f\|_{(L^p(\Omega, d^{\alpha p}), W^{1,p}(\Omega, d^{\alpha p}, d^{(\alpha+1)p}))_{s,p}} = \left( \int_0^\infty \lambda^{-sp} K(\lambda, f)^p \,  \frac{d\lambda}{\lambda} \right)^\frac{1}{p}
$$
and the $K$ functional is given by 
$$
K(\lambda, f)= \inf\{ \|g\|_{L^p(\Omega, d^{\alpha p})}+ \lambda \|h\|_{W^{1,p}(\Omega, d^{\alpha p}, d^{(\alpha+1)p})} : f= g+h\}
$$
\end{defi}

\section{Proof of the embedding $(L^{\lowercase{p}}(\Omega), W^{\lowercase{1,p}}(\Omega, 1, \lowercase{d^p}))_{\lowercase{s,p}}\subset \widetilde W^{\lowercase{s,p}}(\Omega, 1, \lowercase{d^{sp}})$ }
\label{sec:1}

We begin this section with two necessary lemmas. 

\begin{lemma}
\label{lema-phi}
Let $\phi\in L^1(\O)$, $\phi\ge 0$, $w\in\R^n$ such that $|w|<\frac12$, and $0\le t\le 1$. Then,
\begin{equation}
\label{eq:cambio de variables}
\int_\O \phi(x+t d(x) w) \, dx \le 2^n \int_\O \phi(x) \, dx.
\end{equation}
\end{lemma}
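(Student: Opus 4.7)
The plan is to view \eqref{eq:cambio de variables} as a change-of-variables bound for the map
$$T(x) := x + t\, d(x)\, w.$$
I would first check that $T$ maps $\Omega$ into itself, then establish injectivity, then bound its Jacobian from below, and finally apply the area formula.

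That $T(\Omega)\subset\Omega$ is immediate from $|T(x)-x| = t|w|d(x) < \frac{1}{2}d(x)$, which places $T(x)$ in the open ball $B(x,d(x))\subset\Omega$. For injectivity, if $T(x_1)=T(x_2)$ then $x_1-x_2 = t(d(x_2)-d(x_1))w$, and combining the $1$-Lipschitz property of $d$ with $|w|<1/2$ and $t\le 1$ yields $|x_1-x_2| \le \frac{1}{2}|x_1-x_2|$, which forces $x_1=x_2$.

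Since $d$ is $1$-Lipschitz on $\R^n$, Rademacher's theorem guarantees that $\nabla d(x)$ exists and satisfies $|\nabla d(x)|\le 1$ for almost every $x\in\Omega$. At such points
$$DT(x) = I + t\, w\,(\nabla d(x))^\top$$
is a rank-one perturbation of the identity, so
$$\det DT(x) = 1 + t\langle w,\nabla d(x)\rangle,$$
and the Cauchy--Schwarz inequality together with the assumed size bounds on $w$ and $t$ gives $|\det DT(x)|\ge 1/2$.

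Since $T$ is globally Lipschitz (as a sum of Lipschitz maps), injective, and has Jacobian bounded below by $1/2$, the area formula for Lipschitz maps applies and yields
$$\int_\Omega \phi(T(x))\,dx \le 2\int_\Omega \phi(T(x))\,|\det DT(x)|\,dx = 2\int_{T(\Omega)}\phi(y)\,dy \le 2\int_\Omega\phi(y)\,dy,$$
which is stronger than \eqref{eq:cambio de variables} since $2\le 2^n$ for $n\ge 1$. The one place to be careful is the merely Lipschitz (non-$C^1$) nature of $T$; this is handled by the standard area formula together with Rademacher's theorem, so no serious obstacle arises.
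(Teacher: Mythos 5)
Your proof is correct and follows the same change-of-variables strategy as the paper: both use the map $T(x)=x+t\,d(x)\,w$, verify $T(\Omega)\subset\Omega$ and injectivity by the same $1$-Lipschitz argument for $d$, and then control the Jacobian. The one genuine difference is how the Jacobian is estimated: the paper bounds the operator norm $\|DT^{-1}\|\le 2$ via a Neumann series and then uses $|\det DT^{-1}|\le \|DT^{-1}\|^n$ to get the constant $2^n$, whereas you exploit the rank-one structure of $DT-I=t\,w\,(\nabla d)^\top$ and the matrix determinant lemma to compute $\det DT = 1+t\langle w,\nabla d\rangle\ge 1/2$ directly, yielding the sharper, dimension-free constant $2$. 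You are also more explicit about a point the paper glosses over, namely that $T$ is only Lipschitz rather than $C^1$, and that the area formula together with Rademacher's theorem is what justifies the change of variables; this is the right way to make the argument rigorous. In short, same route, slightly tighter Jacobian bound, and a more careful invocation of the change-of-variables theorem.
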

\begin{proof}
Consider the change of variables $y=F(x):= x+t d(x) w$. First of all, observe
that $F(\O)\subset\O$ and that $F$ is injective, since $F(x)=F(\bar x)$ implies that 
$$
|x-\bar x|=t |w| |d(\bar x)-d(x)|\le \frac12 |x-\bar x|
$$
and therefore $x=\bar x$. Consequently,
\begin{equation}
\label{cambio}
\int_\O \phi(x+t d(x) w) \, dx = \int_{F(\O)} \phi(y) |J^{-1}(y)| dy
\end{equation}
where $J=\det DF$. But $DF=I-tB$ with $B=-w (\nabla d)^T$, and
$$
\|B\|=\max_{|v|=1} |Bv|=\max_{|v|=1} |w(\nabla d)^Tv|
\le \max_{|v|=1} |w| |(\nabla d)^Tv|\le |w| |\nabla d| \le \frac12
$$
Then, we have that
$$
|DF^{-1}|=|(I-tB)^{-1}|=\Big|\sum_{j=0}^\infty t^j B^j\Big|
\le 2
$$
and so $|J^{-1}|\le 2^n$, which together with \eqref{cambio} concludes  the proof.	
\end{proof}
\begin{lemma}
\label{rem:suave} 
Let $\Omega \subset \mathbb{R}^n$ be a bounded domain,  $1\le p <\infty$, and $\alpha \ge 0$. Then $C^\infty(\Omega) \cap W^{1,p}(\Omega, d^{\alpha p}, d^{(\alpha+1)p})$ is dense in $ W^{1,p}(\Omega, d^{\alpha p}, d^{(\alpha+1)p})$.
\end{lemma}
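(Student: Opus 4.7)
The plan is to adapt the classical Meyers--Serrin ``$H=W$'' argument to the weighted setting, exploiting the crucial observation that on any compact subset $K\Subset\Omega$ the weights $d^{\alpha p}$ and $d^{(\alpha+1)p}$ are bounded above and below by positive constants (depending on $K$), so that on $K$ convergence in the weighted $W^{1,p}$ norm is equivalent to convergence in the unweighted $W^{1,p}(K)$ norm. Moreover, since $\alpha\geq 0$ and $\Omega$ is bounded, both weights are uniformly bounded above on all of $\Omega$, so there is no obstruction near the boundary beyond the usual tail control.

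First I would set up an exhaustion by open sets $\Omega_k=\{x\in\Omega\colon d(x)>1/k\}\cap B(0,k)$ with $\overline{\Omega_k}\subset\Omega_{k+1}$ and $\bigcup_k \Omega_k=\Omega$, and define $U_k=\Omega_{k+2}\setminus\overline{\Omega_{k-1}}$ (with $\Omega_j=\emptyset$ for $j\leq 0$) so that $\{U_k\}$ is a locally finite open cover of $\Omega$ by sets whose closures are compact subsets of $\Omega$. Pick a smooth partition of unity $\{\psi_k\}$ subordinate to $\{U_k\}$ and write $f=\sum_k f\psi_k$, each summand being compactly supported in $U_k$. Leibniz gives $\nabla(f\psi_k)=\psi_k\nabla f+f\nabla\psi_k$, which belongs to $L^p(U_k)$ (unweighted), since the weights are bounded below on $\overline{U_k}\Subset\Omega$ and hence the unweighted and weighted norms of $f$ and $\nabla f$ on $U_k$ are comparable.

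Next, given $\varepsilon>0$, I would mollify each $f\psi_k$ by a standard mollifier $\rho_{\varepsilon_k}$ with $\varepsilon_k>0$ chosen small enough that (i) the support of $\rho_{\varepsilon_k}*(f\psi_k)$ stays inside a slightly enlarged set $V_k\Subset\Omega$ that meets only finitely many $U_j$, and (ii)
\[
\|\rho_{\varepsilon_k}*(f\psi_k)-f\psi_k\|_{W^{1,p}(V_k)}<\frac{\varepsilon}{C_k\,2^k},
\]
where $C_k$ is the upper bound of $d^{\alpha p}+d^{(\alpha+1)p}$ on $V_k$. This is possible by the standard $L^p$ mollification convergence applied to $f\psi_k$ and $\nabla(f\psi_k)$ on $V_k$. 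The function $f_\varepsilon:=\sum_k \rho_{\varepsilon_k}*(f\psi_k)$ is then a locally finite sum of smooth compactly supported functions, hence belongs to $C^\infty(\Omega)$, and by construction
\[
\|f_\varepsilon-f\|_{W^{1,p}(\Omega,d^{\alpha p},d^{(\alpha+1)p})}\leq \sum_k C_k^{1/p}\,\|\rho_{\varepsilon_k}*(f\psi_k)-f\psi_k\|_{W^{1,p}(V_k)}<\varepsilon,
\]
which gives the desired density.

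The main (and essentially only) subtlety I expect is the bookkeeping needed to pass from the local unweighted estimates to the global weighted estimate: one must choose the mollification parameters $\varepsilon_k$ after having fixed the local weight bounds $C_k$, and one must verify that the supports remain in the locally finite family $\{V_k\}$ so that the sum defining $f_\varepsilon$ is actually smooth and the errors add up. Everything else is a transcription of the classical argument, since the weights play no role away from the boundary and are merely bounded above near it.
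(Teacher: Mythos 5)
Your proof is correct in substance and is essentially a hands-on rederivation of the result the paper simply cites: the paper's own proof has only three lines, because it observes that $W^{1,p}(\Omega, d^{\alpha p}, d^{(\alpha+1)p}) \subset W^{1,p}_{\mathrm{loc}}(\Omega)$ (weights bounded below on compacta) and then invokes Haj\l asz's refinement of Meyers--Serrin, which says that any $u \in W^{1,p}_{\mathrm{loc}}(\Omega)$ admits a $g \in C^\infty(\Omega)$ with $\|u-g\|_{W^{1,p}(\Omega)} < \varepsilon$ (even if $u \notin W^{1,p}(\Omega)$). Since $\alpha \ge 0$ and $\Omega$ is bounded, the weights are bounded above globally, so the unweighted norm dominates the weighted one and the conclusion follows immediately. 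Your exhaustion/partition-of-unity/mollification argument is precisely the proof of Haj\l asz's theorem, which you have adapted correctly; the paper's route buys brevity by outsourcing to the literature, while yours buys self-containment. One simplification worth noting: because the weights are bounded above by a single global constant $C_0$ (not just locally by $C_k$), you do not actually need to feed $C_k$ into the choice of $\varepsilon_k$; choosing $\varepsilon_k$ so that the unweighted error on $V_k$ is $<\varepsilon/2^k$ already gives $\|f_\varepsilon - f\|_{W^{1,p}(\Omega)} < \varepsilon$, and multiplying by $C_0^{1/p}$ passes to the weighted norm. This also sidesteps the small exponent slip in your final display, where the tolerance should really be $\varepsilon/(C_k^{1/p} 2^k)$ rather than $\varepsilon/(C_k 2^k)$ for the sum to telescope to $\varepsilon$.
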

\begin{proof}
Since $ W^{1,p}(\Omega, d^{\alpha p}, d^{(\alpha+1)p}) \subset W^{1,p}_{loc}(\Omega)$, by  \cite[Theorem 1]{H} for any $f\in W^{1,p}(\Omega, d^{\alpha p}, d^{(\alpha+1)p})$ and $\varepsilon>0$ there exists  $g \in C^\infty(\Omega)$ such that $f-g\in W^{1,p}_{0}(\Omega)$ and  $\|f-g\|_{W^{1,p}(\Omega)}<\varepsilon$. Hence, 
$\|f-g\|_{W^{1,p}(\Omega, d^{\alpha p}, d^{(\alpha+1)p})}\le C \|f-g\|_{W^{1,p}(\Omega)}< C\varepsilon$ and $g \in W^{1,p}(\Omega, d^{\alpha p}, d^{(\alpha+1)p})$ by the triangle inequality.
\end{proof}

Now we are ready to show that $(L^p(\Omega), W^{1,p}(\Omega, 1, d^p))_{s,p}\subset \widetilde W^{s,p}(\Omega, 1, d^{sp})$. Observe that, since one always has the continuous embedding $(X,Y)_{s,p} \subset X+Y$ and $L^p(\Omega)+ W^{1,p}(\Omega, 1, d^{p})= L^p(\Omega)$, it follows that $\|f\|_{L^p}< C \|f\|_{(L^p(\Omega), W^{1,p}(\Omega, 1, d^p))_{s,p}}$. Hence, it is enough to prove that 
\begin{equation}
\label{interpolado contenido en fraccionario}
|f|_{\widetilde W^{s,p}(\Omega, d^{sp})}^p
\le C \int_0^\infty r^{-sp} K(r, f)^p \frac{dr}{r}.
\end{equation}

We begin by making the change of variables $w=\frac{y-x}{d(x)}$ in  definition \eqref{eq:seminorma_frac} (for $\beta= {sp}$) to obtain
\begin{equation}
\label{seminorma con cambio de variable}
|f|_{\widetilde W^{s,p}(\Omega, d^{sp})}^p
=\int_\Omega \int_{|w|<\frac12} \frac{|f(x+d(x)w)-f(x)|^p}{|w|^{n+sp}} \,  dw  \,\,dx.
\end{equation}
Now, for each $w$, calling $r=|w|$, we consider a decomposition
$f=g_r+ h_r$ such that 
$$
\|g_r\|_{L^p(\Omega)} + r \|\nabla h_r\|_{L^p(\Omega, d^{sp})} \le 2K(r,f). 
$$
By  Lemma \ref{rem:suave} we  may also assume that $h_r$ is smooth.

Since  $|f|_{\widetilde W^{s,p}(\Omega, d^{sp})} \le |g_r|_{\widetilde W^{s,p}(\Omega, d^{sp})} + |h_r|_{\widetilde W^{s,p}(\Omega, d^{sp})}$ it
is enough to estimate each term separately. 
Using \eqref{seminorma con cambio de variable} for $g_r$ we have

$$
\begin{aligned}
|g_r|^p_{\widetilde W^{s,p}(\Omega, d^{sp})}
&= \int_\O  \int_{|w|< \frac12} \frac{|g_r(x+d(x)w) -g_r(x)|^p}{|w|^{n+sp}}  \, dw \, dx \\
	&\le C  \int_{|w|< \frac12} \int_\Omega \frac{|g_r(x)|^p}{|w|^{n+sp}}  \, dx \, dw 
	+ C \int_{|w|<\frac12}\int_{\O}  \frac{|g_r(x+d(x)w)|^p}{|w|^{n+sp}} \, dx \, dw\\
	&\le  C \int_{|w|<\frac12} \frac{\|g_r\|_p^p}{|w|^{n+sp}} \, dw 
\end{aligned}
$$
where in the last step we have used Lemma \ref{eq:cambio de variables} for $\phi=|g_r|^p$ and $t=1$.

On the other hand, observe that for $x\in\O$, $|w|<1/2$ and $0\le t\le 1$
we have $d(x)\le 2 d(x+td(x)w)$, and then
\begin{align*}
|h_r(x+d(x)w) -h_r(x)|^p 
&= \left| \int_0^1 \nabla h_r(x+td(x)w) \cdot d(x) w \, dt \right|^p\\
&\le  \int_0^1 \left|\nabla h_r(x+td(x)w)\right|^p  d(x)^p |w|^p \, dt\\
&\le  2^p\int_0^1 \left|\nabla h_r(x+td(x)w)\right|^p  d(x+td(x)w)^p |w|^p \, dt.
\end{align*}

Therefore,
$$
\begin{aligned}
 |h_r&|^p_{\widetilde W^{s,p}(\Omega, d^{sp})}=\int_{\O} \int_{|w|< \frac12}  \frac{|h_r(x+d(x)w) -h_r(x)|^p}{|w|^{n+sp}}  \, dw \, dx \\
&\le 	
2^p \int_{|w|< \frac12} \frac{|w|^p}{|w|^{n+sp}} \int_0^1 
\int_{\O}\left|\nabla h_r(x+td(x)w)\right|^p  d(x+td(x)w)^p \, dx  \, dt  \, dw 
\\
&\le C \int_{|w|< \frac12} \|\nabla h_r \, d\|^p_{L^p(\Omega)}  \, |w|^{-n-sp+p} \, dw,
 \end{aligned}
$$
where we have used again Lemma \ref{lema-phi}, now for
$\phi=|\nabla h_r|^pd^p$.

Putting together the previous estimates and integrating in polar coordinates
we have

\begin{align*}
	|f|_{\widetilde W^{s,p}(\Omega)}^p &\le C \left( \int_0^\frac12 \frac{r^{n-1}}{r^{n+sp}} \|g_r\|^p_{L^p(\Omega)} \, dr 
	+  \int_0^\frac12 \frac{r^{n-1}}{r^{n+sp-p}} \|\nabla h_r\|^p_{L^p(\Omega, d^p)} \, dr \right)\\
	&\le C \int_0^\frac12 r^{-sp} \Big(\|g_r\|_{L^p(\Omega)} + r \|\nabla h_r\|_{L^p(\Omega, d^{p})}\Big)^p  \, \frac{dr}{r}\\
	&\le C \int_0^\frac12 r^{-sp} K(r,f)^p \,  \frac{dr}{r} 
\end{align*}
which proves \eqref{interpolado contenido en fraccionario} and
 the claimed embedding.

\section{Proof of the embedding $\widetilde W^{\lowercase{s,p}}(\Omega, 1,  \lowercase{d^{sp}}) \subset (L^{\lowercase{p}}(\Omega), W^{\lowercase{1,p}}(\Omega, 1, \lowercase{d^p}))_{\lowercase{s,p}}$ }
\label{sec:2}

Observe first that, since $K(\lambda, f) \le \|f\|_{L^p(\Omega)}$ and $W^{1,p}(\Omega, 1, d^{\alpha  p}) \subset L^p(\Omega)$, we always have
$$
\int_1^\infty (\lambda^{-s} K(\lambda, f))^p \, \frac{d\lambda}{\lambda} \le \|f\|^p_{L^p(\Omega)} \int_1^\infty \lambda^{-sp} \, \frac{d\lambda}{\lambda} \le C \|f\|^p_{L^p(\Omega)}.
$$

Also,  for a given decomposition, 
\begin{align*}
\int_0^1 (\lambda^{1-s} \|h\|_{L^p(\Omega)})^p \, \frac{d\lambda}{\lambda} & \le C \left\{ \int_0^1 (\lambda^{1-s} \|f\|_{L^p(\Omega)} )^p \frac{d\lambda}{\lambda} + \int_0^1 (\lambda^{1-s} \|g\|_{L^p(\Omega)})^p \frac{d\lambda}{\lambda}\right\}\\
&\le C  \left\{ \|f\|_{L^p(\Omega)}^p +  \int_0^1 (\lambda^{-s} \|g\|_{L^p(\Omega)})^p \frac{d\lambda}{\lambda}\right\}.
\end{align*}

Therefore,
\begin{equation}
\label{simplificada} \int_0^\infty (\lambda^{-s} K(\lambda, f))^p \, \frac{d\lambda}{\lambda}  \le C \left\{ \|f\|^p_{L^p(\Omega)} +\int_0^1 \lambda^{-sp} (\|g\|_{L^p(\Omega)} + \lambda \|\nabla h\|_{L^p(\Omega, d^{p})})^p \frac{d\lambda}{\lambda} \right\},
\end{equation}
and to prove the claimed embedding it suffices to bound the integral on the right-hand side.

For this purpose, we will make use of the Whitney decomposition of $\Omega$, whose definition we recall below  (see for example \cite[Chapter VI]{S} for a proof of its existence). 
Given a cube $Q\subset \R^n$, we denote   $Q^*$  the cube with the same center but expanded by a factor $9/8$.  $d(Q,\partial\O)$ denotes
the distance of $Q$ to the boundary of $\Omega$, while $\mbox{diam}(Q)$ and $\ell_Q$ are the
diameter and length of the edges of  $Q$, respectively. 

\begin{defi}
\label{whitneydefi}
Given $\Omega\subset \R^n$ an open bounded set, a  Whitney decomposition
of $\Omega$ is a family $\mathcal{W}$ of closed dyadic cubes
with pairwise disjoint interiors and satisfying the following properties: 
\medskip
\begin{enumerate}
\item [1)]$\Omega=\cup_{Q\in W}Q$
\medskip
\item [2)]$\mbox{diam} (Q)\le d(Q,\partial\Omega)\le 4\, \d(Q) \quad \forall Q \in \mathcal W$
\medskip
\item [3)]$\frac14\d(Q)\le \d(\tilde Q)\le 4 \, \d(Q)$
\quad $\forall Q,\tilde Q \in \mathcal W$ \quad
\mbox{such that}\ \ $Q\cap \tilde Q\neq \emptyset .$
\end{enumerate}
\end{defi}

Let $\mathcal{W}=\{Q\}$ be a Whitney decomposition of $\Omega$.   For a fixed  $0<\lambda\le 1$ we denote by $\mathcal{W}^\lambda=\{Q^\lambda\}$ a new dyadic  decomposition obtained from $\mathcal{W}$ by dividing each $Q \in \mathcal{W}$  in such a way that $\frac12 \lambda \ell_Q\le \ell_{Q^\lambda}\le  \lambda \ell_Q$. Notice that, in particular, this means that $\frac12 \lambda \mbox{diam} (Q)\le  \mbox{diam}({Q^\lambda}) \le  \lambda \mbox{diam} (Q)$.  The center of  $Q^\lambda_j$ in this new partition  will be denoted by $x_j^\lambda$,   and we will write $\ell^\lambda_j$ instead of $\ell_{Q^\lambda_j}$. 

For each $\mathcal{W}^\lambda$ we can define the covering of expanded cubes  ${\mathcal{W}^\lambda}^*=\{(Q_j^\lambda)^*\}$. Observe that it satisfies  $\sum_j \chi_{(Q_j^\lambda)^*}(x) \le C$ for every $x\in \Omega$, and that for $x \in (Q_j^\lambda)^*$, $\frac34 \frac{\mbox{diam}(Q_j^\lambda)}{\lambda} \le d(x) \le \frac{41}4 \frac{\mbox{diam}(Q_j^\lambda)}{\lambda}$. Associated to this covering we can also introduce a smooth partition of unity  $\{\psi_j^\lambda\}$ such that  $\mbox{supp} (\psi_j^\lambda)\subset (Q_j^\lambda)^*$, $0\le \psi_j^\lambda\le 1$,     $\sum_j \psi_j^\lambda = 1$ in $\Omega$, and $\|\nabla \psi_j^\lambda\|_\infty \le \frac{C}{\ell_j^\lambda}$.

For a given (fixed)  $C^\infty$ function  $\varphi \ge 0$ such that $\mbox{supp}(\varphi) \subset B(0,\frac14)$ and $\int \varphi =1$, and for each $t>0$, we define $\varphi_t=t^{-n} \varphi(t^{-1}x)$. Then, for a given $f \in \widetilde W^{s,p}(\Omega, 1, d^{sp})$ we can  define
 $$h^\lambda(y)=\sum_j f_j^\lambda \psi_j^\lambda(y),$$ 
 with  
 $$f_j^\lambda = \int_{\mathbb{R}^n} f* \varphi_{\ell^\lambda_j}(z) \varphi_{\ell^\lambda_j} (z-x_j^\lambda) \, dz,$$
 which is a smooth approximation of $f$.

Now, we are ready to show that 
$$
\int_0^1 \lambda^{-sp} (\|f-h^\lambda\|_{L^p(\Omega)} +\lambda \|\nabla h^\lambda\|_{L^p(\Omega, d^{p})} )^p \frac{d\lambda}{\lambda} \le C |f|_{\widetilde W^{s,p}(\Omega, d^{sp})}^p,
$$
which will prove the embedding $ \widetilde W^{s,p}(\Omega, 1, d^{sp}) \subset (L^p(\Omega), W^{1,p}(\Omega, 1, d^p))_{s,p}$. 

Since the elements of ${\mathcal{W}^\lambda}^*$ have finite overlapping, we have that
\begin{equation}
\label{h-lambda}\|f-h^\lambda\|_{L^p(\Omega)}^p \le  C \sum_j \|f-f_j^\lambda\|_{L^p((Q_j^\lambda)^*)}^p.
\end{equation}
Now, if $y\in (Q_j^\lambda)^*$, 
$$
f(y)- f_j^\lambda = \int_{\mathbb{R}^n} (f(y) - f* \varphi_{\ell_j^\lambda}(z)) \varphi_{\ell_j^\lambda} (z-x_j^\lambda) \, dz,
$$
so, if we let $u(x,t)=f* \varphi_t (x)$ and $g(t)=u(tz+(1-t)y, t\ell^\lambda_j)$,  we have that  
\begin{align*}
f(y)- f* \varphi_{\ell_j^\lambda}(z) &= g(0) - g(1) = - \int_0^1 g'(t) \, dt \\
&= - \int_0^1 \Big[\nabla u (y+t(z-y), t\ell_j^\lambda) \cdot (z-y) + \frac{\partial u}{\partial t}(y+t(z-y), t\ell_j^\lambda) \, \ell_j^\lambda \Big] \, dt.
\end{align*}
Hence,
\begin{align*}f(y)- f_j^\lambda &= \int_{\mathbb{R}^n} (f(y) - f* \varphi_{\ell_j^\lambda}(z)) \varphi_{\ell_j^\lambda} (z-x_j^\lambda) \, dz\\
&= - \int_{\mathbb{R}^n} \int_0^1 \Big[\nabla u (y+t(z-y), t\ell_j^\lambda)\cdot(z-y) + \frac{\partial u}{\partial t}(y+t(z-y), t\ell_j^\lambda) \, \ell_j^\lambda \Big] \, dt \,  \varphi_{\ell_j^\lambda} (z-x_j^\lambda) \, dz.
\end{align*}
Observe that  the integral vanishes unless $z-x_j^\lambda \in \mbox{supp} (\varphi_{\ell_j^\lambda})$, that is $|z-x_j^\lambda|\le \frac14 \ell_j^\lambda$, which means that $z\in Q_j^\lambda$. On the other hand, $y\in (Q_j^\lambda)^*$ and then  $|z-y|\le  9/8\sqrt{n}\ell_j^\lambda$.  Changing variables $x=y + t(z-y)$,  we get  $|x-y|\le 9/8\sqrt{n} t \ell_j^\lambda $
 and also $x\in (Q_j^\lambda)^*$, since $x$ belongs to the segment joining $y$ with $z$. In particular, we can write
\begin{align*}f(y)- f_j^\lambda = -  \int_0^1 \int_{|x-y|<C t \ell_j^\lambda} \Big[\nabla u (x, t\ell_j^\lambda)\cdot\Big(\frac{x-y}{t}\Big) + \frac{\partial u}{\partial t}(x, t\ell_j^\lambda) \, \ell_j^\lambda \Big]  \varphi_{\ell_j^\lambda} \Big(\frac{x-y}{t}+ y-x_j^\lambda\Big) \frac{\chi_{(Q_j^\lambda)^*}(x)}{t^n}  \, dx \, dt.
\end{align*}
Since $\int \nabla \varphi=0$, we have
\begin{align*}
\nabla u(x, t\ell_j^\lambda) &= f*\nabla \varphi_{t\ell_j^\lambda}(x)= \int_{\mathbb{R}^n} f(w) \nabla\varphi_{t\ell_j^\lambda}(x-w) \, dw\\
&= -\int_{\mathbb{R}^n} (f(x)-f(w))  \frac{1}{(t\ell_j^\lambda)^{n+1}} \nabla\varphi \Big(\frac{x-w}{t\ell_j^\lambda}\Big) \, dw
\end{align*}
and, similarly, using that  $\int \frac{\partial \varphi_t}{\partial t}=0$,
\begin{align*}
\frac{\partial u}{\partial t}(x, t\ell_j^\lambda) &= f*\frac{\partial \varphi_{t\ell_j^\lambda}}{\partial t}(x) = \int_{\mathbb{R}^n} f(w)  \frac{\partial \varphi_{t\ell_j^\lambda}}{\partial t}(x-w) \, dw\\
&=  -\int_{\mathbb{R}^n} (f(x)-f(w)) \Big[ -\nabla\varphi \Big(\frac{x-w}{t\ell_j^\lambda}\Big) \cdot \Big(\frac{x-w}{(\ell_j^\lambda)^{n+1} t^{n+2}}\Big)- n \varphi\Big( \frac{x-w}{t\ell_j^\lambda}\Big) \frac{1}{(\ell_j^\lambda)^n t^{n+1}}\Big)\Big] \, dw.
\end{align*}

Therefore we arrive at $f(y)-f_j^\lambda = I_1 + I_2 + I_3$, with 
$$
I_1 =  \int_0^1  \int_{|x-y|<C t \ell_j^\lambda} \int_{\mathbb{R}^n} (f(x)-f(w)) \frac{1}{(t\ell_j^\lambda)^{n+1}} \nabla\varphi \Big(\frac{x-w}{t\ell_j^\lambda}\Big) \cdot\Big(\frac{x-y}{t}\Big)   \varphi_{\ell_j^\lambda}\Big(\frac{x-y}{t}+ y-x_j^\lambda\Big) \frac{\chi_{(Q_j^\lambda)^*}(x)}{t^n} \, dw \, dx \, dt,
$$
$$
I_2 =  \int_0^1  \int_{|x-y|<C t \ell_j^\lambda} \int_{\mathbb{R}^n} (f(x)-f(w)) \frac{1}{(t\ell_j^\lambda)^{n+1}} \nabla\varphi \Big(\frac{x-w}{t\ell_j^\lambda}\Big) \cdot\Big(\frac{x-w}{t}\Big)   \varphi_{\ell_j^\lambda}\Big(\frac{x-y}{t}+ y-x_j^\lambda\Big) \frac{\chi_{(Q_j^\lambda)^*}(x)}{t^n} \, dw \, dx  \, dt,
$$
$$
I_3 =   \int_0^1  \int_{|x-y|<C t \ell_j^\lambda} \int_{\mathbb{R}^n} (f(x)-f(w)) \frac{\ell_j^\lambda}{(t\ell_j^\lambda)^{n+1}} \varphi \Big(\frac{x-w}{t\ell_j^\lambda}\Big)    \varphi_{\ell_j^\lambda}\Big(\frac{x-y}{t}+ y-x_j^\lambda\Big) \frac{n \chi_{(Q_j^\lambda)^*}(x)}{t^n} \, dw  \, dx \, dt. $$
Using that $\frac{x-w}{t\ell_j^\lambda}\in \mbox{supp} (\varphi)$, we have that $|x-w|< \frac14 t\ell_j^\lambda$ and  we can bound
\begin{align*}
|f(y)-f_j^\lambda| &\le C \int_0^1  \int_{|x-y|<C t \ell_j^\lambda} \int_{\mathbb{R}^n} |f(x)-f(w)| \frac{\ell_j^\lambda}{(t\ell_j^\lambda)^{n+1}} \left|\nabla\varphi \Big(\frac{x-w}{t\ell_j^\lambda}\Big)\right| \left| \varphi_{\ell_j^\lambda}\Big(\frac{x-y}{t}+ y-x_j^\lambda\Big)\right| \frac{\chi_{(Q_j^\lambda)^*}(x)}{t^n} \, dw  \, dx \, dt \\
&+ C \int_0^1  \int_{|x-y|<C t \ell_j^\lambda} \int_{\mathbb{R}^n} |f(x)-f(w)| \frac{\ell_j^\lambda}{(t\ell_j^\lambda)^{n+1}} \left|\varphi \Big(\frac{x-w}{t\ell_j^\lambda}\Big)\right| \left| \varphi_{\ell_j^\lambda}\Big(\frac{x-y}{t}+ y-x_j^\lambda\Big)\right| \frac{\chi_{(Q_j^\lambda)^*}(x)}{t^n} \, dw  \, dx \, dt 
\end{align*}
Moreover, recalling that for $x\in (Q_j^\lambda)^*$,  $d(x)\ge  \frac{3}{4} \frac{\ell_j^\lambda}{\lambda}$, we also have
$$
|x-w|< \frac14 t \ell_j^\lambda \le \frac14 \frac{\ell_j^\lambda}{\lambda} \le \frac13 d(x) < \frac{d(x)}{2},
$$
which means 
\begin{align*}
&|f(y)-f_j^\lambda| \\
&\le C \int_0^1  \int_{|x-y|<C t \ell_j^\lambda}  \int_{|x-w|<\frac{d(x)}{2}} |f(x)-f(w)| \frac{\ell_j^\lambda}{(t\ell_j^\lambda)^{n+1}} \left|\nabla\varphi \Big(\frac{x-w}{t\ell_j^\lambda}\Big)\right| \left| \varphi_{\ell_j^\lambda}\Big(\frac{x-y}{t}+ y-x_j^\lambda\Big)\right| \frac{\chi_{(Q_j^\lambda)^*}(x)}{t^n} \, dw  \, dx \, dt \\ 
&+ C \int_0^1  \int_{|x-y|<C t \ell_j^\lambda}  \int_{|x-w|<\frac{d(x)}{2}} |f(x)-f(w)| \frac{\ell_j^\lambda}{(t\ell_j^\lambda)^{n+1}} \left|\varphi \Big(\frac{x-w}{t\ell_j^\lambda}\Big)\right| \left| \varphi_{\ell_j^\lambda}\Big(\frac{x-y}{t}+ y-x_j^\lambda\Big)\right| \frac{\chi_{(Q_j^\lambda)^*}(x)}{t^n} \, dw  \, dx \, dt\\
&= I + II.
\end{align*}

We begin by bounding $I$.  H\"older's inequality in $dx \, dt$ gives us
\begin{align*}
I &\le C \left(  \int_0^1   \int_{|x-y|<C t \ell_j^\lambda} \Big( \int_{|x-w|<\frac{d(x)}{2}} |f(x)-f(w)|  \frac{1}{(\ell_j^\lambda)^n} \frac{\chi_{(Q_j^\lambda)^*}(x)}{t^{\frac{n+1}{p}+\frac{\varepsilon}{p'}+n}}\left|\nabla\varphi  \Big(\frac{x-w}{t\ell_j^\lambda}\Big)\right|  \, dw\Big)^p dx \, dt \right)^\frac{1}{p}  (\ell^\lambda_j)^{-\frac{n}{p}},\\
\end{align*}
where we have used that, for $\varepsilon>0$ to be chosen below,
 \begin{align*}
\Big( \int_0^1&   \int_{\mathbb{R}^n}\frac{1}{t^{n+1-\varepsilon}}  \Big| \varphi_{\ell_j^\lambda}\Big(\frac{x-y}{t}+ y-x_j^\lambda\Big)\Big|^{p'} dt \, dx\Big)^\frac{1}{p'}  \le C (\ell_j^\lambda)^{-\frac{n}{p}}.
\end{align*}

A further  application of H\"older's inequality in $dw$ leads to
$$
I \le C \left(  \int_0^1  \int_{|x-y|<C t \ell_j^\lambda} \int_{|x-w|<\frac{d(x)}{2}} |f(x)-f(w)|^p \chi_{|x-w|<\frac14 t\ell_j^\lambda} \, dw \,   \frac{\chi_{(Q_j^\lambda)^*}(x)}{t^{2n+1+\varepsilon(p-1)}} \, dx \, dt \right)^\frac{1}{p}  (\ell_j^\lambda)^{-\frac{n}{p}+ \frac{n}{p'}-n }.\\
$$
where we have used that
$$
\left(\int_{\mathbb{R}^n} \Big| \nabla\varphi\Big( \frac{x-w}{t\ell_j^\lambda}\Big)\Big|^{p'} \, dw\right)^\frac{1}{p'} \le C (t\ell_j^\lambda)^\frac{n}{p'}.
$$

The term  $II$ can be bounded similarly  observing that 
$$
\left(\int_{\mathbb{R}^n} \Big| \varphi\Big( \frac{x-w}{t\ell_j^\lambda}\Big)\Big|^{p'} \, dw\right)^\frac{1}{p'} \le C (t\ell_j^\lambda)^\frac{n}{p'}.
$$
Hence, for $y\in (Q_j^\lambda)^*$, we arrive at 
$$ 
|f(y)-f_j^\lambda|\le C \left(  \int_0^1  \int_{|x-y|<C t \ell_j^\lambda}  \int_{|x-w|<\frac{d(x)}{2}} |f(x)-f(w)|^p \chi_{|x-w|<\frac14 t\ell_j^\lambda } \, dw \,  \frac{\chi_{(Q_j^\lambda)^*}(x)}{t^{2n+1+\varepsilon(p-1)}} \, dx \, dt \right)^\frac{1}{p}  (\ell_j^\lambda)^{-\frac{n}{p}+ \frac{n}{p'}-n }.$$

Replacing this expression in \eqref{h-lambda}, we get
\begin{align*}
\int_0^1 &\lambda^{-sp} \|f-h^\lambda\|_{L^p(\Omega)}^p \frac{d\lambda}{\lambda}\\
&\le C \int_0^1 \sum_j \int_{(Q_j^\lambda)^*} \int_0^1   \int_{|x-y|<C t \ell_j^\lambda} \int_{|x-w|<\frac{d(x)}{2}} |f(x)-f(w)|^p \chi_{|x-w|<\frac14 t\ell_j^\lambda} \, dw \,  \frac{(\ell_j^\lambda)^{-2n} \lambda^{-sp} \chi_{(Q_j^\lambda)^*}(x)}{t^{2n+1+\varepsilon(p-1)}} \, dx \, dt  \, dy \,  \frac{d\lambda}{\lambda}\\
&\le C \int_0^1 \sum_j  \int_{(Q_j^\lambda)^*} \int_0^1  \int_{|x-w|<\frac{d(x)}{2}} |f(x)-f(w)|^p \chi_{|x-w|<\frac14 t\ell_j^\lambda} \, dw \int_{|x-y|< Ct\ell_j^\lambda} dy \,  \frac{ (\ell_j^\lambda)^{-2n} \lambda^{-sp}}{t^{2n+1+\varepsilon(p-1)}} \, dt \, dx  \, \frac{d\lambda}{\lambda}\\
&\le C \int_0^1 \sum_j  \int_{(Q_j^\lambda)^*} \int_0^1  \int_{|x-w|<\frac{d(x)}{2}} |f(x)-f(w)|^p \chi_{|x-w|<\frac14 t\ell_j^\lambda} \, dw \,  \frac{ (\ell_j^\lambda)^{-n} \lambda^{-sp}}{t^{n+1+\varepsilon(p-1)}} \, dt \, dx  \, \frac{d\lambda}{\lambda}\\
&\le C \int_0^1 \sum_j  \int_{(Q_j^\lambda)^*} \int_0^1  \int_{|x-w|<\frac{d(x)}{2}} |f(x)-f(w)|^p \chi_{|x-w|<\frac14 t \lambda d(x)} \, dw \,  \frac{ d(x)^{-n} \lambda^{-n-sp}}{t^{n+1+\varepsilon(p-1)}} \, dt \, dx  \, \frac{d\lambda}{\lambda},
\end{align*}
where in the last line we have used that  for $x\in (Q_j^\lambda)^*$,  $\ell_j^\lambda $ is equivalent to $\lambda d(x)$. Now we add up in $j$ and change the order of integration to obtain
\begin{align*}
\int_0^1 &\lambda^{-sp} \|f-h^\lambda\|_{L^p(\Omega)}^p \frac{d\lambda}{\lambda}\\
&\le C \int_0^1   \int_{\Omega} \int_0^1  \int_{|x-w|<\frac{d(x)}{2}} |f(x)-f(w)|^p \chi_{|x-w|<\frac14 t \lambda d(x)} \, dw \,  \frac{ d(x)^{-n} \lambda^{-n-sp}}{t^{n+1+\varepsilon(p-1)}} \, dt \, dx  \, \frac{d\lambda}{\lambda}\\
&\le C    \int_{\Omega} \int_0^1  \int_{|x-w|<\frac{d(x)}{2}} |f(x)-f(w)|^p \, dw  \int_{ \frac{4 |x-w|}{t d(x)}}^\infty \lambda^{-n-sp-1} \, d\lambda \,  \frac{ d(x)^{-n} }{t^{n+1+\varepsilon(p-1)}} \, dt \, dx  \\
&\le C    \int_{\Omega} \int_0^1  \int_{|x-w|<\frac{d(x)}{2}} \frac{|f(x)-f(w)|^p}{|x-w|^{n+sp}} \, dw  \,  \frac{ d(x)^{sp} }{t^{-sp+1+\varepsilon(p-1)}} \, dt \, dx  \\
&\le C    \int_{\Omega}  \int_{|x-w|<\frac{d(x)}{2}} \frac{|f(x)-f(w)|^p}{|x-w|^{n+sp}} \, dw  \, d(x)^{sp}  \, dx\\
&=|f|^p_{\widetilde W^{s,p}(\Omega, 1, d^{sp})},  
\end{align*}
where in the last inequality we have used that we can choose $\varepsilon>0$  such that $-sp+1+\varepsilon(p-1)<1$.

For the other term of the $K$ functional, we write
$$
|\nabla h^\lambda(y)| = \Big|\sum_j f_j^\lambda \nabla\psi_j^\lambda(y)\Big| \le \sum_j |f_j^\lambda - f(y)| |\nabla\psi_j^\lambda(y)| \le C \sum_j |f_j^\lambda-f(y)| \frac{1}{\ell_j^\lambda} \chi_{(Q_j^\lambda)^*}(y).
$$
Therefore,
\begin{align*}
\int_0^1  \lambda^{p(1-s)} \|\nabla h^\lambda\|_{L^p(\Omega, d^p)}^p \, \frac{d\lambda}{\lambda} &\le C \int_0^1 \sum_j  \lambda^{p(1-s)} \|(f-f_j^\lambda) (\ell_j^\lambda)^{-1}\|_{L^p((Q_j^\lambda)^*, d^p)}^p \, \frac{d\lambda}{\lambda}\\
&\le C \int_0^1 \sum_j  \lambda^{p(1-s)} \|(f-f_j^\lambda) \lambda^{-1}\|_{L^p((Q_j^\lambda)^*)}^p \, \frac{d\lambda}{\lambda}\\
&\le C \int_0^1 \sum_j  \lambda^{-sp} \|f-f_j^\lambda\|_{L^p((Q_j^\lambda)^*)}^p \, \frac{d\lambda}{\lambda}
\end{align*}
and this expression can be bounded as before.

Summing up, by \eqref{simplificada} and the previous bounds, we have that
\begin{align*}
\int_0^\infty \lambda^{-sp} K(\lambda, f)^p \, \frac{d\lambda}{\lambda} &\le C \left\{ \|f\|^p_{L^p(\Omega)} + \int_0^1 \lambda^{-sp} (\|f-h^\lambda\|_{L^p(\Omega)} +\lambda \|\nabla h^\lambda\|_{L^p(\Omega, d^{p})} )^p \frac{d\lambda}{\lambda} \right\} \\
& \le C \left\{ \|f\|^p_{L_p(\Omega)} + |f|^p_{\widetilde W^{s,p}(\Omega, 1, d^{sp})} \right\},
\end{align*}
and, therefore, the claimed embedding follows.
\section{Proof of Theorem 1.1}
\label{sec:3}

As said before, the proof of Theorem \ref{inter_general} for any $\alpha> 0$ is very similar to the proof developed in detail for $\alpha=0$ in the previous sections, so we will only  indicate briefly what are the necessary changes. 

To prove the embedding $(L^p(\Omega, d^{\alpha p}), W^{1,p}(\Omega, d^{\alpha p}, d^{(\alpha+1)p}))_{s,p} \subset \widetilde W^{s,p}(\Omega, d^{\alpha p}, d^{(\alpha+s)p})$ proceed as in Section 3. For the term 
$$
|g_r|_{\widetilde W(\Omega, d^{(\alpha+s)p})} = \int_\Omega \int_{|w|<\frac12} \frac{|f(x+w d(x)) - f(x)|^p}{|w|^{n+sp}} \, dw \, d(x)^{\alpha p} \, dx, 
$$ 
split the integral as before,  observe that $d(x)\le 2 d(x+ wd(x))$ and use Lemma \ref{lema-phi} for $|g_r \, d^{\alpha }|^p$ and $t=1$ to arrive at the desired bound. The part involving 
 $\nabla h_r$ follows exactly as in Section 3 making the necessary changes in the exponents.

To prove the embedding $ \widetilde W^{s,p}(\Omega, , d^{\alpha p}, d^{(\alpha+s)p}) \subset (L^p(\Omega, d^{\alpha p}), W^{1,p}(\Omega, d^{\alpha p}, d^{(\alpha+1)p}))_{s,p}$ , observe that the generalization of \eqref{simplificada} is straightforward and use the same pointwise bound as in Section 4 to arrive at
 \begin{align*}
&\int_0^1 \lambda^{-sp} \|f-h^\lambda\|_{L^p(\Omega, d^{\alpha p})}^p \frac{d\lambda}{\lambda}\\
&\le C \int_0^1 \sum_j \int_{(Q_j^\lambda)^*} \int_0^1   \int_{|x-y|<C t \ell_j^\lambda} \int_{|x-w|<\frac{d(x)}{2}} |f(x)-f(w)|^p \chi_{|x-w|<\frac14 t\ell_j^\lambda} \, dw \,  \frac{(\ell_j^\lambda)^{-2n} \lambda^{-sp}  \chi_{(Q_j^\lambda)^*}(x) d(y)^{\alpha p}}{t^{2n+1+\varepsilon(p-1)}} \, dx \, dt  \, dy \,  \frac{d\lambda}{\lambda}.
\end{align*}
Use that $d(y) \le C d(x)$ to bound $d(y)^{\alpha p}$ by $C d(x)^{\alpha p}$ and follow the rest of the steps in that section to finish the proof.


\begin{thebibliography}{}


\bibitem{AB} Acosta, G.; Borthagaray, J. P. \emph{A fractional Laplace equation: regularity of solutions and finite element approximations}. SIAM J. Numer. Anal. 55,  (2017),  no. 2, 472--495.

\bibitem{DD-fracpoincare} Drelichman, I.; Dur\'an, R. G. \emph{Improved Poincar\'e inequalities in fractional Sobolev spaces}. Ann. Acad. Sci. Fenn. Math. 43 (2018), no. 2, 885--903.

\bibitem{DD-interpolacion} Drelichman, I.; Dur\'an, R. G. \emph{On the interpolation space $(L^p(\Omega),W^{1,p}(\Omega))_{s,p}$ in non-smooth domains}. J. Math. Anal. Appl. 470 (2019), no. 1, 91--101.

\bibitem{DD-BBM} Drelichman, I.; Dur\'an, R. G. \emph{The Bourgain-Br\'ezis-Mironescu formula in arbitrary bounded domains}.  To appear in Proc. Amer. Math. Soc.

\bibitem{D} Dyda, B. \emph{On comparability of integral forms}. J. Math. Anal. Appl. 318 (2006), no. 2, 564--577. 

\bibitem{DIV} Dyda, B.; Ihnatsyeva, L.; V\"ah\"akangas, A. V. \emph{On improved fractional Sobolev-Poincar\'e inequalities}. Ark. Mat. 54 (2016), no. 2, 437--454.


\bibitem{G} Guo, CY. \emph{Fractional Sobolev-Poincar\'e inequalities in irregular domains}. Chin. Ann. Math. Ser. B 38 (2017), 839--856 . 

\bibitem{H} Haj{\l}asz, P. \emph{Note on Meyers-Serrin's theorem}. Expo. Math. 11, (1993), no. 4, 377-379.

\bibitem{HV} Hurri-Syrj\"anen, R.; V\"ah\"akangas, A. V. \emph{On fractional Poincaré inequalities}. J. Anal. Math. 120 (2013), 85--104. 

\bibitem{L} Lunardi, A. Interpolation theory. Publications of the Scuola Normale Superiore 16. Edizioni della Normale, 2018.

\bibitem{MP} Mart\'inez-Perales, J.C. \emph{A note on generalized Poincar\'e-type inequalities with applications to weighted improved Poincar\'e-type inequalities}. 
 Preprint arXiv:1907.12435, 2019.

\bibitem{PS} Prats, M.; Saksman, E. \emph{A ${\rm T}(1)$ theorem for fractional Sobolev spaces on domains}. J. Geom. Anal. 27 (2017), no. 3, 2490--2538.

\bibitem{P} Pyatkov, S. G. \emph{Interpolation of weighted Sobolev spaces}. (Russian) Mat. Tr. 4, No. 1,  (2001), 122--173.

\bibitem{S} {\sc E. M. Stein} Singular integrals an differentiability
properties of functions. Princeton Univ. Press,  1970.

\bibitem{T} {\sc L. Tartar} An introduction to Sobolev spaces and interpolation spaces. 
Lecture Notes of the Unione Matematica Italiana 3. Berlin: Springer , 2007.
 \end{thebibliography}
\end{document}